\documentclass{amsart}%
\usepackage{amsfonts}
\usepackage{amsmath}
\usepackage{amssymb}
\usepackage{graphicx}%
\setcounter{MaxMatrixCols}{30}
\providecommand{\U}[1]{\protect\rule{.1in}{.1in}}
\newtheorem{theorem}{Theorem}

\newtheorem{corollary}[theorem]{Corollary}

\newtheorem{example}[theorem]{Example}

\newtheorem{lemma}[theorem]{Lemma}

\newtheorem{remark}[theorem]{Remark}

\begin{document}
\title{Non-polynomial entire solutions to $\sigma_{k}$ equations }
\author{Micah Warren}
\address{Fenton Hall\\
University of Oregon\\
Eugene, OR 97403}
\email{micahw@uoregon.edu}
\thanks{The author's work is supported in part by the NSF via DMS-1161498}
\maketitle

\begin{abstract}
For $2k=n+1$, we exhibit non-polynomial solutions to the Hessian equation%
\[
\sigma_{k}(D^{2}u)=1
\]
on all of $%
\mathbb{R}
^{n}.$

\end{abstract}

\section{Introduction}

In this note, we demonstrate the following.\ 

\begin{theorem}
For
\[
n\geq2k-1,
\]
there exist non-polynomial elliptic entire solutions to the equation
\begin{equation}
\sigma_{k}\left(  D^{2}u\right)  =1\label{main}%
\end{equation}
on $%
\mathbb{R}
^{n}.$ 
\end{theorem}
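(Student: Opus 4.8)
The plan is to reduce \eqref{main} to a pair of ordinary differential equations by means of a rotationally symmetric ansatz in all but one variable. Writing $x=(x',t)$ with $x'=(x_{1},\dots ,x_{n-1})$ and $t=x_{n}$, I would look for a solution of the form
\[
u(x',t)=f(t)+\tfrac{1}{2}\,g(t)\,|x'|^{2}
\]
for scalar functions $f,g$ of one variable. The point of this ansatz is that the Hessian has an explicitly diagonalizable spectrum: setting $\rho=|x'|$, the eigenvalues are $g(t)$ with multiplicity $n-2$ (the directions in $x'$ orthogonal to $x'$ itself), together with the two eigenvalues $\nu_{\pm}$ of the $2\times 2$ block
\[
\begin{pmatrix} g & g'\rho\\ g'\rho & f''+\tfrac12 g''\rho^{2}\end{pmatrix}.
\]
Consequently $\sigma_{k}(D^{2}u)=\sum_{j=0}^{2}\binom{n-2}{k-j}g^{k-j}e_{j}(\nu_{+},\nu_{-})$, where $e_{0}=1$, $e_{1}=\nu_{+}+\nu_{-}$ and $e_{2}=\nu_{+}\nu_{-}$ are read off as the trace and determinant of the block.

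Since $u$ must satisfy $\sigma_{k}(D^{2}u)=1$ for every $x'$, and all $\rho$-dependence enters through $\rho^{2}$, I would require that the coefficient of $\rho^{2}$ vanish and the constant term equal $1$. After dividing by $g^{k-2}$ and using $\binom{n-2}{k-1}+\binom{n-2}{k-2}=\binom{n-1}{k-1}$, the first condition becomes the autonomous equation
\[
\frac{g\,g''}{(g')^{2}}=\frac{2\binom{n-2}{k-2}}{\binom{n-1}{k-1}}=\frac{2(k-1)}{n-1},
\]
while the constant term yields
\[
\binom{n-1}{k}g^{k}+\binom{n-1}{k-1}g^{k-1}f''=1.
\]
The second relation merely defines $f''$ once $g$ is known, so everything hinges on the equation for $g$.

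The ratio $2(k-1)/(n-1)$ equals $1$ exactly when $n=2k-1$, the critical case. There the equation for $g$ becomes $g\,g''=(g')^{2}$, i.e. $(\log g)''=0$, whose positive solutions are the exponentials $g(t)=e^{\lambda t}$, which are entire and manifestly non-polynomial. Substituting $g=e^{\lambda t}$ into the second relation expresses $f''$ as a fixed linear combination of $e^{-(k-1)\lambda t}$ and $e^{\lambda t}$, which integrates to an entire $f$. The resulting $u=f(t)+\tfrac12 e^{\lambda t}|x'|^{2}$ is then entire, non-polynomial, and solves \eqref{main} on $\mathbb{R}^{2k-1}$. For $n>2k-1$ the ratio is strictly less than $1$, and the solutions of the $g$-equation are non-integer powers $|t-t_{0}|^{m}$, which fail to be entire; this is precisely why the construction is tied to $n\ge 2k-1$, with equality as the essential case.

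Two points would then remain. First, ellipticity, i.e. $D^{2}u\in\Gamma_{k}$: I expect this almost for free. At a point with $\rho=0$ and $t$ chosen so that $f''(t)>0$ (for instance $t\to-\infty$ when $\lambda>0$) all eigenvalues are positive, so $D^{2}u$ lies in $\Gamma_{k}$ there; since $\sigma_{k}(D^{2}u)\equiv 1>0$ and, by Maclaurin's inequalities, $\sigma_{k}$ cannot be positive on $\partial\Gamma_{k}$, the continuous map $x\mapsto D^{2}u(x)$ from the connected space $\mathbb{R}^{n}$ into $\{\sigma_{k}>0\}$ cannot escape the component $\Gamma_{k}$. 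Second, the passage from $n=2k-1$ to general $n\ge 2k-1$: given the critical solution $w$ on $\mathbb{R}^{2k-1}$, I would simply set $u(x)=w(x_{1},\dots,x_{2k-1})$ on $\mathbb{R}^{n}$. Adjoining zero eigenvalues changes none of $\sigma_{1},\dots,\sigma_{k}$, so $\sigma_{k}(D^{2}u)=1$ and $D^{2}u$ remains in the open cone $\Gamma_{k}$; moreover the linearized matrix $\partial\sigma_{k}/\partial M$ stays positive definite, its entry along a flat direction being $\sigma_{k-1}$ of the eigenvalues of $D^{2}w$, which is positive. The main work, I anticipate, lies in the bookkeeping of the ODE reduction and in confirming global membership in $\Gamma_{k}$; once the exponential profile is identified, entirety and ellipticity follow with little further effort.
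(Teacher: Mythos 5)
Your proposal is correct and follows essentially the same route as the paper: the same rotationally symmetric ansatz (the paper simply guesses the profile $r^{2}e^{t}+h(t)$, i.e.\ $g(t)=2e^{t}$, rather than deriving the exponential from the ODE $gg''=(g')^{2}$), the same identification of the critical dimension $2k=n+1$ through vanishing of the $\rho^{2}$-coefficient via the binomial identity, and the same extension to $n>2k-1$ by adding dummy variables. The only cosmetic difference is the ellipticity step, where the paper proves a pointwise lemma (at most one negative eigenvalue together with $\sigma_{k}>0$ forces $D^{2}u\in\Gamma_{k}^{+}$) while you use connectedness of the image of $x\mapsto D^{2}u(x)$ in $\{\sigma_{k}>0\}$; both arguments amount to deforming into the positive cone.
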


\begin{corollary}
For all $n\geq3,$ there exist on $%
\mathbb{R}
^{n}$ non-polynomial entire solutions to
\begin{equation}
\sigma_{2}\left(  D^{2}u\right)  =1.\label{sig2}%
\end{equation}

\end{corollary}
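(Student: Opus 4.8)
The Corollary is the case $k=2$ of the Theorem: the hypothesis $n\geq 2k-1$ becomes $n\geq 3$, and $\sigma_{k}(D^{2}u)=1$ is exactly \eqref{sig2}. Thus it reduces to producing the non-polynomial entire elliptic solutions promised by the Theorem at the threshold $n=3$ (where $2k=n+1$) and in every higher dimension. Since this specialization is immediate, all of the content lies in the Theorem, so I describe how I would prove that.

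My plan is to treat first the critical dimension $n=2k-1$ emphasized in the abstract, and then to promote the construction to $n>2k-1$. For the critical case I would not attempt a radial ansatz: writing $u=f(|x|)$ collapses \eqref{main} to a first order linear ODE for $(f')^{k}$, whose only entire solution is the quadratic one, every other solution blowing up at the origin. Instead I would impose a weaker symmetry that still makes the equation explicit, seeking $u$ affine in one variable, $u(x',x_{n})=x_{n}\,g(x')+h(x')$ with $x'\in\mathbb{R}^{\,n-1}=\mathbb{R}^{2k-2}$. Then $u_{nn}=0$, the border of the Hessian is $\nabla g$, and $\sigma_{k}(D^{2}u)$ is a polynomial in $x_{n}$ of degree $k$ whose leading coefficient is $\sigma_{k}(D^{2}g)$. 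Forcing independence of $x_{n}$ peels off a hierarchy of conditions on $(g,h)$ over $\mathbb{R}^{2k-2}$: the top one is the degenerate equation $\sigma_{k}(D^{2}g)=0$, the bottom one has the form $\sigma_{k}(D^{2}h)=1+(\text{quadratic in }\nabla g)$, and the intermediate ones are mixed coupling relations between $D^{2}g$ and $D^{2}h$. Choosing $g$ to be a genuinely non-polynomial member of the degenerate (developable) family $\sigma_{k}(D^{2}g)=0$ then makes $u$ itself non-polynomial, and for $n>2k-1$ I would run the same affine reduction with $x'\in\mathbb{R}^{\,n-1}$, using the extra variables for flexibility.

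The step I expect to be the real obstacle is producing $(g,h)$ that are simultaneously entire, non-polynomial, and \emph{elliptic}, i.e.\ with $D^{2}u$ in the positivity cone $\Gamma_{k}$ of $\sigma_{k}$ at every point, so that \eqref{main} stays non-degenerate. The leading condition $\sigma_{k}(D^{2}g)=0$ sits exactly on the boundary of $\Gamma_{k}$, so its solutions are delicate, and the coupling relations must be met globally without forcing either function to be quadratic, as both the separable and the radial reductions do. Verifying that the assembled $u$ remains strictly inside $\Gamma_{k}$ on all of $\mathbb{R}^{n}$, rather than merely on a half-space or away from a singular set, is where I expect the analysis to concentrate; once global smoothness and ellipticity are secured, non-polynomiality is automatic from the choice of $g$, and the Corollary follows by setting $k=2$.
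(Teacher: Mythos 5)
Your reduction of the Corollary to the case $k=2$ of the Theorem is fine, and your remark that the fully radial ansatz $u=f(|x|)$ produces only the quadratic entire solution is correct. The fatal problem is the construction you propose instead: the affine ansatz $u(x',x_n)=x_n\,g(x')+h(x')$ with a non-polynomial $g$ in the degenerate family $\sigma_k(D^2g)=0$ is incompatible with the global ellipticity that you yourself (correctly) impose. Since $\Gamma_k\subset\Gamma_1\cap\Gamma_2$ for $k\geq2$, an admissible solution must satisfy $\sigma_1(D^2u)>0$ and $\sigma_2(D^2u)>0$ on all of $\mathbb{R}^n$. But with your ansatz $u_{nn}=0$, so $\sigma_1(D^2u)=x_n\,\Delta g(x')+\Delta h(x')$ is linear in $x_n$, and positivity for every $x_n\in\mathbb{R}$ forces $\Delta g\equiv0$; then $\sigma_2(D^2u)$ is a quadratic polynomial in $x_n$ whose leading coefficient is $\sigma_2(D^2g)=\frac12\left[(\Delta g)^2-|D^2g|^2\right]=-\frac12|D^2g|^2\leq0$, and positivity for every $x_n$ forces $D^2g\equiv0$. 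So $g$ must be affine in every dimension: the ``genuinely non-polynomial member of the degenerate family'' on which your whole plan rests does not exist. (For $k=2$ ellipticity is not even a hypothesis you could drop: since $2\sigma_2=\sigma_1^2-|\lambda|^2$ has Lorentzian signature, the level set $\{\sigma_2=1\}$ in eigenvalue space is a two-sheeted hyperboloid, so every entire $C^2$ solution automatically has $D^2u$ in $\Gamma_2$ or $-\Gamma_2$.) Worse, in the critical case $n=3$, $k=2$ that you single out, once $g$ is affine your system collapses to $\det D^2h=1+|\nabla g|^2$ with $D^2h$ positive definite, and J\"orgens' theorem \cite{J54} forces $h$ to be quadratic: your ansatz provably yields only quadratic polynomials. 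For $n>3$ the scheme is circular, since non-polynomiality must then come from $h$, which has to solve $\sigma_2(D^2h)=1+|\nabla g|^2$, i.e.\ essentially the same problem in dimension $n-1$.

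The paper couples the distinguished variable to the others multiplicatively rather than affinely: $u(x,t)=r^2e^t+h(t)$ with $r=|x|$, $x\in\mathbb{R}^{n-1}$. Computing $\sigma_k$ of the resulting bordered Hessian gives $e^{kt}2^{k-1}\left[\binom{n-2}{k-1}-\binom{n-2}{k-2}\right]r^2+A_{n,k}e^{(k-1)t}h''+B_{n,k}e^{kt}$, and the bracket vanishes exactly when $2k=n+1$; the $r$-dependence then cancels identically, leaving the explicitly integrable ODE $h''(t)=(1-B_{n,k}e^{kt})/(A_{n,k}e^{(k-1)t})$, with, e.g., $u=(x^2+y^2)e^t+\frac14e^{-t}-e^t$ for $n=3$. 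Ellipticity follows from the paper's Lemma (at most one negative eigenvalue plus $\sigma_k>0$ implies $D^2u\in\Gamma_k^+$), and all dimensions $n>2k-1$ follow trivially by adding dummy variables, since $\sigma_k(D^2\tilde u)=\sigma_k(D^2u)$ when $\tilde u(z,w)=u(z)$. The structural point your ansatz misses is that the entry $u_{nn}$ must be allowed to grow along with the off-diagonal border (in the paper it is $r^2e^t+h''(t)$ against the border $2re^t$); freezing $u_{nn}\equiv0$ is precisely what the inclusions $\Gamma_k\subset\Gamma_1\cap\Gamma_2$ convert into the rigidity above.
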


\ For $k=1,$ the entire harmonic functions in the plane arising as real parts
of analytic functions are classically known. $\ $For $k=n,$ the famous
Bernstein result of J\"{o}rgens \cite{J54}, Calabi \cite{MR0106487}, and
Pogorelov \cite{Pog72} states that all entire solutions to the
Monge-Amp\`{e}re equation are quadratic. \ \ \ Chang and Yuan \cite{CY2010}
have shown that any entire convex solution to (\ref{sig2}) in any dimension
must be quadratic. \ To the best of our knowledge, for $1<k<n$, the examples
presented here are the first known non-trivial entire solutions to $\sigma
_{k}$ equations. \

The special Lagrangian equation is the following%
\begin{equation}
\sum_{i=1}^{n}\arctan\lambda_{i}=\theta\label{sLag}%
\end{equation}
(here $\lambda_{i}$ are eigenvalues of $D^{2}u)$ for
\[
\theta\in\left(  -\frac{n}{2}\pi,\frac{n}{2}\pi\right)
\]
a constant. \ Fu \cite{Fu} showed that when $n=2$ and $\theta\neq0$ all
solutions are quadratic. When $n=2$ and $\theta=0$ the equation (\ref{sLag})
becomes simply the Laplace equation, which admits well-known non-polynomial
solutions. \ Yuan \cite{YuanInventiones} showed that all convex solutions to
special Lagrangian equations are quadratic. \ 

The critical phase for special Lagrangian equations is
\[
\theta=\frac{n-2}{2}\pi.
\]
Yuan \cite{YY06} has shown that for values above the critical phase, all
entire solutions are quadratic. \ On the other hand, by adding a quadratic to
a harmonic function, one can construct nontrivial entire solutions for phases
\newline%
\[
\left\vert \theta\right\vert <\frac{n-2}{2}\pi.
\]
By \cite{HL} when $n=3$, the critical equation
\[
\sum_{i=1}^{3}\arctan\lambda_{i}=\frac{\pi}{2}%
\]
is equivalent to the equation (\ref{sig2}). Thus Corollary 2 answers the
critical phase Bernstein question when $n=3.$ \ In the process, we also show
the following. \ 

\begin{theorem}
There exists a special Lagrangian graph in $%
\mathbb{C}
^{3}$ over $%
\mathbb{R}
^{3}$ that does not graphically split. \ 
\end{theorem}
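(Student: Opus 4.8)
The plan is to feed the non-polynomial solution produced in Theorem 1 (equivalently Corollary 2) in the case $k=2$, $n=3$ into the special Lagrangian picture via the Harvey--Lawson correspondence \cite{HL}, and then to rule out splitting by a dimension-reduction and phase-counting argument. First I would fix an entire non-polynomial solution $u$ to $\sigma_{2}(D^{2}u)=1$ on $\mathbb{R}^{3}$ and form its gradient graph $\Gamma=\{(x,Du(x)):x\in\mathbb{R}^{3}\}\subset\mathbb{C}^{3}$. Since on $\mathbb{R}^{3}$ the equation $\sigma_{2}(D^{2}u)=1$ is equivalent to the critical-phase equation $\sum_{i=1}^{3}\arctan\lambda_{i}=\tfrac{\pi}{2}$, the graph $\Gamma$ is a special Lagrangian graph over $\mathbb{R}^{3}$ of phase $\tfrac{\pi}{2}$. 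It then remains only to show that $\Gamma$ does not graphically split.

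To argue non-splitting I would suppose, for contradiction, that after a unitary change of coordinates $\Gamma$ is a product of lower-dimensional special Lagrangian graphs. In $\mathbb{C}^{3}$ the only factorizations are a splitting into three factors lying in $\mathbb{C}^{1}$, or a factor in $\mathbb{C}^{1}$ together with a factor in $\mathbb{C}^{2}$, and in each case the phases of the factors add up to the total phase $\tfrac{\pi}{2}$. The two structural inputs I would invoke are: (i) a special Lagrangian graph in $\mathbb{C}^{1}$ of phase $\theta_{1}$ is the gradient graph of $\tfrac{1}{2}(\tan\theta_{1})x^{2}$, so it is an honest graph over its line only for $\theta_{1}\in(-\tfrac{\pi}{2},\tfrac{\pi}{2})$ and is then quadratic; and (ii) Fu's theorem \cite{Fu}, that a special Lagrangian graph in $\mathbb{C}^{2}$ of phase $\theta_{2}\neq 0$ is quadratic.

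Then I would run the bookkeeping. In the three-factor case each factor is quadratic by (i), so $u$ is quadratic. In the $\mathbb{C}^{1}\times\mathbb{C}^{2}$ case, additivity gives $\theta_{1}+\theta_{2}=\tfrac{\pi}{2}$; if the $\mathbb{C}^{2}$ factor carried the critical phase $\theta_{2}=0$ (the only phase admitting a non-polynomial, i.e.\ harmonic, factor), then $\theta_{1}=\tfrac{\pi}{2}$, which by (i) is the degenerate vertical curve and fails to be a graph over the corresponding line, so the split would not be graphical; otherwise $\theta_{2}\neq 0$, the $\mathbb{C}^{2}$ factor is quadratic by (ii) and the $\mathbb{C}^{1}$ factor is quadratic by (i), so again $u$ is quadratic. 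Thus a graphical split forces $u$ to be polynomial, contradicting the choice of $u$. I expect the main obstacle to lie not in this enumeration but in (a) pinning down \emph{graphically split} precisely enough that the requirement "each factor is itself a graph" is legitimate, and (b) verifying that phase additivity and the exclusion of the degenerate $\theta_{1}=\tfrac{\pi}{2}$ factor remain valid under the full group of admissible unitary coordinate changes.
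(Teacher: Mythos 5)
Your proposal is correct and is essentially the argument the paper intends (the paper leaves it implicit, saying only that Theorem 2 follows ``in the process''): take the non-polynomial solution of $\sigma_{2}(D^{2}u)=1$ on $\mathbb{R}^{3}$, pass to its gradient graph via Harvey--Lawson, and observe that any graphical split forces each factor to have constant phase summing to $\tfrac{\pi}{2}$, whence the one-dimensional factor is quadratic and Fu's theorem makes any two-dimensional factor quadratic, contradicting non-polynomiality. Your worry (b) is moot under the paper's definition of splitting, which is stated at the level of the potential ($u$ is a sum of functions of independent variables), so only orthogonal changes of the base coordinates, under which the block-diagonal Hessian and phase-additivity argument go through unchanged, need to be considered.
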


Harvey and Lawson \cite{HL}, show that a\ graph
\[
(x,\nabla u(x))\subset%
\mathbb{C}
^{n}%
\]
is special Lagrangian and a minimizing surface if and only if $u$ satisfies
(\ref{sLag}). \ We say a graph splits graphically when the function $u$ can be
written the sum of two functions in independent variables. \ 

There are still many holes in the Bernstein picture for $\sigma_{k}$
equations. To begin with, when $n=4$ the existence of interesting solutions to
$\sigma_{3}=1.$ For special Lagrangian equations the existence of critical
phase solutions when $n\geq4$ is open. \ 

\section{\bigskip Proof}

We will assume that $n$ is odd and
\[
2k=n+1.
\]
We construct a solution $u$ on $%
\mathbb{R}
^{n}.$ \ The general result will follow by noting that if we define
\[
\tilde{u}:%
\mathbb{R}
^{n}\times%
\mathbb{R}
^{m}\rightarrow%
\mathbb{R}
\]
via%
\[
\tilde{u}(z,w)=u(z)
\]
then
\[
\sigma_{k}\left(  D^{2}\tilde{u}\right)  =\sigma_{k}\left(  D^{2}u\right)  =1.
\]

Consider functions on $%
\mathbb{R}
^{n-1}\times%
\mathbb{R}
$ of the form%
\[
u(x,t)=r^{2}e^{t}+h(t)
\]
where
\[
r=\left(  x_{1}^{2}+x_{2}^{2}+...+x_{n-1}^{2}\right)  ^{1/2}.
\]
Compute the Hessian, rotating $%
\mathbb{R}
^{n-1}$ so that $x_{1}=r:$%
\begin{align}
D^{2}u &  =\left(
\begin{array}
[c]{ccccc}%
2e^{t} & 0 & ... & 0 & 2re^{t}\\
0 & 2e^{t} & 0 & ... & 0\\
... & 0 & ... & 0 & ...\\
0 & ... & 0 & 2e^{t} & 0\\
2re^{t} & 0 & ... & 0 & r^{2}e^{t}+h^{\prime\prime}(t)
\end{array}
\right)  \nonumber\\
&  =e^{t}\left(
\begin{array}
[c]{ccccc}%
2 & 0 & ... & 0 & 2r\\
0 & 2 & 0 & ... & 0\\
... & 0 & ... & 0 & ...\\
0 & ... & 0 & 2 & 0\\
2r & 0 & ... & 0 & r^{2}+e^{-t}h^{\prime\prime}(t)
\end{array}
\right)  .\label{elliptic}%
\end{align}

\bigskip We then compute. \ The $k$-th symmetric polyomial is given by the sum
of $k$-minors. \ Let
\begin{equation}
S=\left\{  \alpha\subset\left\{  1,...,n\right\}  :\left\vert \alpha
\right\vert =k\right\}  ,
\end{equation}
and let%
\begin{align*}
A &  =\left\{  \alpha\in S:1\in\alpha\right\}  \\
B &  =\left\{  \alpha\in S:n\in\alpha\right\}  .
\end{align*}
We express $S$ as a disjoint union
\[
S=\left(  A\cap B\right)  \cup(B\backslash A)\cup\left(  S\backslash B\right)
.
\]
Define
\[
\sigma_{k}^{(\alpha)}=\det\left(
\begin{array}
[c]{c}%
k\times k\text{ matrix with}\\
\text{ row and columns}\\
\text{chosen from }\alpha\text{ }%
\end{array}
\right)  .
\]
\ \ For $\alpha\in\left(  A\cap B\right)  $ we have
\[
\sigma_{k}^{(\alpha)}=\det\left(  e^{t}\left(
\begin{array}
[c]{ccccc}%
2 & 0 & ... & 0 & 2r\\
0 & 2 & 0 & ... & 0\\
... & 0 & ... & 0 & ...\\
0 & ... & 0 & 2 & 0\\
2r & 0 & ... & 0 & r^{2}+e^{-t}h^{\prime\prime}%
\end{array}
\right)  \right)  ,
\]
that is%
\[
\sigma_{k}^{(\alpha)}=e^{kt}2^{k-2}\left(  2r^{2}+2e^{-t}h^{\prime\prime
}-4r^{2}\right)  .
\]
Next, for $\alpha\in B\backslash A$,%
\[
\sigma_{k}^{(\alpha)}=\det\left(  e^{t}\left(
\begin{array}
[c]{cccc}%
2 & 0 & ... & 0\\
0 & ... & 0 & 0\\
... & 0 & 2 & ...\\
0 & 0 & ... & r^{2}+e^{-t}h^{\prime\prime}%
\end{array}
\right)  \right)  ,
\]
that is
\[
\sigma_{k}^{(\alpha)}=e^{kt}2^{k-1}\left(  r^{2}+e^{-t}h^{\prime\prime
}\right)  .
\]
Finally, for $\alpha\in\left(  S\backslash B\right)  $ we have
\[
\sigma_{k}^{(\alpha)}=\det\left(  e^{t}\left(
\begin{array}
[c]{ccc}%
2 & 0 & ...\\
0 & ... & 0\\
... & 0 & 2
\end{array}
\right)  \right)  ,
\]
that is
\[
\sigma_{k}^{(\alpha)}=e^{kt}2^{k}.
\]
We sum these up:%
\[
\sigma_{k}\left(  D^{2}u\right)  =\sum_{\alpha\in\left(  A\cap B\right)
}\sigma_{k}^{\alpha}+\sum_{\alpha\in(B\backslash A}\sigma_{k}^{\alpha}%
+\sum_{\alpha\in\left(  S\backslash B\ \right)  }\sigma_{k}^{\alpha}.
\]
Counting, we get%
\begin{align}
\sigma_{k}\left(  D^{2}u\right)   &  =\binom{n-2}{k-2}e^{kt}2^{k-1}\left(
e^{-t}h^{\prime\prime}-r^{2}\right)  \label{e1}\\
&  +\binom{n-2}{k-1}e^{kt}2^{k-1}\left(  r^{2}+e^{-t}h^{\prime\prime}\right)
\nonumber\\
&  +\binom{n-1}{k}e^{kt}2^{k}.\nonumber
\end{align}
Grouping the terms, we see%
\begin{align*}
\sigma_{k}\left(  D^{2}u\right)   &  =e^{kt}2^{k-1}\left[  -\binom{n-2}%
{k-2}+\binom{n-2}{k-1}\right]  r^{2}\\
&  +e^{kt}2^{k-1}\left[  \binom{n-2}{k-2}+\binom{n-2}{k-1}\right]
e^{-t}h^{\prime\prime}\\
&  +e^{kt}2^{k-1}2\binom{n-1}{k}.
\end{align*}
Now%
\[
-\binom{n-2}{k-2}+\binom{n-2}{k-1}=-\frac{\left(  n-2\right)  !}{\left(
n-k\right)  !(k-2)!}+\frac{\left(  n-2\right)  !}{\left(  n-k-1\right)
!(k-1)!}.
\]
This vanishes if and only if
\[
1=\frac{\left(  n-k\right)  !(k-2)!}{\left(  n-k-1\right)  !(k-1)!}%
=\frac{\left(  n-k\right)  }{(k-1)},
\]
or precisely when
\[
n-k=k-1
\]
or
\[
2k=n+1.
\]
Thus for this choice of $k$, (\ref{e1}) becomes%
\[
\sigma_{k}\left(  D^{2}u\right)  =A_{n,k}e^{\left(  k-1\right)  t}%
h^{\prime\prime}+B_{n,k}e^{kt}%
\]
for some constants $A_{n,k},B_{n,k}.$ \ Setting to this expression to $1,$ we
solve for $h^{\prime\prime}(t)$%
\begin{equation}
h^{\prime\prime}(t)=\frac{1-B_{n,k}e^{kt}}{A_{n,k}e^{\left(  k-1\right)  t}%
},\label{hequation}%
\end{equation}
noting the right-hand side is a smooth function in $t.$ \ \ Integrating twice
in $t\,\ $yields solutions to (\ref{hequation}) and hence to (\ref{main}). \ 

To see that the equation is elliptic, we first note that inspecting
(\ref{elliptic}) the $n-2$ eigenvalues in the middle must be positive. \ Of
the remaining two, at least one must be positive as the diagonal (of the
$2\times2$ matrix) contains at least one positive entry. \ \ We then note the
following. \ 

\begin{lemma}
Suppose that
\[
\sigma_{k}(D^{2}u)>0\text{ }%
\]
and $D^{2}u$ has at most $1$ negative eigenvalue. \ Then $D^{2}u\in\Gamma
_{k}^{+}.$
\end{lemma}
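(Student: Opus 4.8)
The plan is to use the defining property of the G\r{a}rding cone $\Gamma_k^+$ as the connected component of $\{\sigma_k>0\}$ containing the positive cone $\{\lambda_i>0\text{ for all }i\}$. With this description, it suffices to join the eigenvalue vector $\lambda=(\lambda_1,\ldots,\lambda_n)$ of $D^2u$ to a vector with all strictly positive entries by a continuous path along which $\sigma_k$ never vanishes. I would build such a path explicitly from the hypothesis.

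After relabeling I may assume $\lambda_n$ is the smallest eigenvalue, so that the assumption of at most one negative eigenvalue reads $\lambda_1,\ldots,\lambda_{n-1}\ge 0$, while $\lambda_n$ is of unrestricted sign. The key algebraic observation is the expansion in the last variable
\[
\sigma_k(\lambda)=\sigma_k(\lambda_1,\ldots,\lambda_{n-1})+\lambda_n\,\sigma_{k-1}(\lambda_1,\ldots,\lambda_{n-1}),
\]
where the two coefficients are elementary symmetric polynomials in the remaining $n-1$ variables. Since each of $\lambda_1,\ldots,\lambda_{n-1}$ is non-negative, both coefficients are non-negative, so $\sigma_k$, viewed as an affine function of $\lambda_n$ with the other entries frozen, has non-negative slope.

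I would then deform $\lambda$ in two moves. First, raise $\lambda_n$ monotonically to a large positive value; by the non-negative slope just recorded, $\sigma_k$ only increases and hence stays positive, and the resulting vector has all coordinates non-negative. Second, starting from that vector, increase every coordinate simultaneously; because all coordinates are now non-negative, each partial derivative $\partial\sigma_k/\partial\lambda_i$ equals the $(k-1)$-th symmetric polynomial in the variables other than $\lambda_i$ and is therefore non-negative, so $\sigma_k$ again stays positive, while after any positive displacement all coordinates become strictly positive. Concatenating the two segments gives a path inside $\{\sigma_k>0\}$ from $\lambda$ to the open positive cone, so $\lambda\in\Gamma_k^+$.

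The main point to get right is that the path never leaves $\{\sigma_k>0\}$, and this is exactly where the hypothesis of at most one negative eigenvalue is used: deleting the single candidate negative eigenvalue leaves $n-1$ non-negative numbers, which is precisely what forces the expansion coefficients, and later all the partials, to be non-negative. I would also record that the two customary descriptions of $\Gamma_k^+$, namely the component of $\{\sigma_k>0\}$ through the positive cone and the set $\{\sigma_1>0,\ldots,\sigma_k>0\}$, coincide by G\r{a}rding's theory, so the connectedness conclusion indeed places $D^2u$ in the intended cone. Once the one-variable expansion and its sign are in hand, the rest is elementary.
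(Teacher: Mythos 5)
Your proof is correct and follows essentially the same route as the paper: after diagonalizing, you use the nonnegativity of the relevant derivatives of $\sigma_{k}$ (each being a $\sigma_{k-1}$ of the remaining nonnegative eigenvalues) to deform the spectrum monotonically into the positive cone without leaving $\{\sigma_{k}>0\}$, which places $D^{2}u$ in $\Gamma_{k}^{+}$. Your two-stage path merely makes explicit the deformation that the paper compresses into a single step.
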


\begin{proof}
Diagonalize  $D^{2}u$ so that $D^{2}u=diag\left\{  \lambda_{1},\lambda
_{2},...,\lambda_{n}\right\}  $ with $0\leq\lambda_{2}\leq\lambda_{3}%
...\leq\lambda_{n}.$ \ Clearly
\[
\frac{d}{ds}\sigma_{k}(diag\left\{  \lambda_{1}+s,\lambda_{2},...,\lambda
_{n}\right\}  )\geq0
\]
so we may deform $D^{2}u$ to a positive definite matrix $D^{2}u+M$, with
$\sigma_{k}(D^{2}u+sM)>0$ for $s\geq0.$ Thus $D^{2}u$ is in the component of
$\sigma_{k}>0$ containing the positive cone, that is, $D^{2}u\in\Gamma_{k}%
^{+}.$
\end{proof}

\begin{example}
When $n=3$ the function
\[
u(x,y,t)=(x^{2}+y^{2})e^{t}+\frac{1}{4}e^{-t}-e^{t}%
\]
solves
\[
\sigma_{2}(D^{2}u)=1.
\]

\end{example}

\begin{remark}
This method allows one to construct solutions to complex Monge-Amp\`{e}re
equations as well. \ See \cite{WarrenDonaldson}. \ 
\end{remark}

\bibliographystyle{plain}
\bibliography{sigma_k}

\end{document}